\newcommand{\mc}{\mathcal}
\newcommand{\ms}{\mathscr}
\newcommand{\mb}{\mathbb}
\newcommand{\mr}{\mathrm}
\newtheorem{Thm}{Theorem}[section]
\newtheorem{Le}[Thm]{Lemma}
\theoremstyle{definition}
\theoremstyle{remark}
\newtheorem*{Rem}{Remark}
\title[{Counter-examples to the D-S pointwise ergodic theorem on $L^1+L^\infty$}]{Counter-examples to the Dunford-Schwartz pointwise ergodic theorem on $L^1+L^\infty$}
\author{D\'avid Kunszenti-Kov\'acs}
\address{MTA Alfréd Rényi Institute of Mathematics, P.O. Box 127, H-1364 Budapest, Hungary}
\email{\textcolor[rgb]{0.00,0.00,0.84}{daku@renyi.hu}}
\keywords{Pointwise ergodic theorem, Dunford-Schwartz operator, infinite
measure} 
\subjclass[2010]{primary: 47A35; secondary: 37A30, 47B38 }
\begin{document}

\maketitle

\begin{abstract} Extending a result by Chilin and Litvinov, we show by construction that given any $\sigma$-finite infinite measure space $(\Omega,\mc{A}, \mu)$ and a function $f\in L^1(\Omega)+L^\infty(\Omega)$ with $\mu(\{|f|>\varepsilon\})=\infty$ for some $\varepsilon>0$, there exists a Dunford-Schwartz operator $T$ over $(\Omega,\mc{A}, \mu)$ such that $\frac{1}{N}\sum_{n=1}^N (T^nf)(x)$ fails to converge for almost every $x\in\Omega$. In addition, for each operator we construct, the set of functions for which pointwise convergence fails almost everywhere is residual in $L^1(\Omega)+L^\infty(\Omega)$.

\end{abstract}

%%%%%%%%%%%%%%%%%%
%                %
%  Introduction  %
%                %
%%%%%%%%%%%%%%%%%%

\section{Introduction}

For sigma-finite measure spaces $X=(\Omega,\mc{A},\mu)$ that have infinite total measure, operator theoretic aspects of ergodic theory become much more complicated than for probability spaces (or, equivalently, finite measure spaces). On the one hand, the Dunford-Schwartz pointwise ergodic theorem holds, i.e., for any function $f\in L^1(X)$, and any Dunford-Schwartz operator over $\Omega$, the ergodic averages $\frac{1}{N}\sum_{n=1}^N T^nf$ converge almost everywhere (cf. \cite[Theorem VIII.6.6.]{DS}). On the other hand, the same averages need not converge in norm, i.e., the mean ergodic theorem fails in general. In the background lies the non-equivalence of mean ergodicity and weak almost periodicity (orbits of funtions being weakly sequentially relatively compact), boiling down to $\mathds{1}_{\Omega}\notin L^1(X)$ (cf. \cite{KL}). In particular, there is no Jacobs-Glicksberg-deLeeuw-type decomposition (cf., e.g., \cite[Theorem 1.15]{eisner-book}) for general DS operators on $L^1(X)$.

In a recent paper (\cite{CL}), V. Chilin and S. Litvinov investigated pointwise ergodic theorems on infinite measure spaces for functions beyond the classical $L^1$ space. Namely, they determined which functions in the symmetric space $L^1+L^\infty$ yield pointwise almost everywhere convergence for all Dunfod-Schwartz operators under the assumption that the infinite sigma-finite measure space is quasi-non-atomic, i.e., it only contains atoms that have the same measure. In a companion paper (\cite{CCL}) with a third author,  D. \c{C}ömez, it was proved that the Dunford-Schwartz pointwise ergodic theorem holds on symmetric spaces $E$ contained in $L^1+L^\infty$ provided the constant $1$ function is not in $E$, for any sigma-finite measure space. For quasi-non- atomic measures these two papers give a complete characterization of functions for which the Dunford-Schwartz theorem holds true, but the general sigma-finite case was left open. Our aim in this paper is to close this gap, and show that the same result holds without restriction on the atomic part of the measure.

The open question pertains to functions that do not ``decay'' in the sense that for some interval $[a,b]\subset (0,\infty)$, the level set $\left\{|f|\in[a,b]\right\}$ has infinite measure. The original approach used in \cite{CL} was to first provide a counter-example separately for the Lebesgue measure on the positive half-line and for the ``exceptional case'' $\ell^\infty(\mb{N})$. Then the general result followed by decomposing the original measure into its atomic and non-atomic parts, and using isomorphisms between complete Boolean algebras to reduce to the already proven two special cases. Key to their construction  was the idea of using measure-preserving point maps and the corresponding Koopman operators, perturbed by a suitably chosen multiplication operator.\\
Our way of approaching the structure of the measure space is in some sense the opposite: we do not aim at transforming the non-atomic part into its standard Lebesgue space form and then add on a uniform atomic part when necessary, but rather consider fully atomic measures as our base case, and reduce general sigma-finite measure spaces to fully atomic ones by factorisation, stepping away from measure preserving maps and corresponding Koopman operators on the original space.\\
We also show that the counter-example operator that we construct for a specific function is actually a counter-example in the strongest sense possible in that it yields Cesàro averages that fail to converge not only on a set of positive measure, but almost everywhere. In addition, we show that each constructed operator actually has this same property for any ``typical'' function in $L^1+L^\infty$ -- in the Baire category sense.

\section{Results}

\begin{Thm}\label{thm:f}
Let $X=(\Omega, \mc{A},\mu)$ be a $\sigma$-finite infinite measure space, and $f\in L^1(X)+L^\infty(X)$. Suppose that there exists an $\varepsilon>0$ such that $\mu(\{|f|>\varepsilon\})=\infty$. Then there exists a Dunford-Schwartz operator $T$ over $X$ such that $\frac{1}{N}\sum_{n=1}^N (T^nf)(x)$ fails to converge for almost every $x\in\Omega$.
\end{Thm}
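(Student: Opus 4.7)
Following the scheme announced in the introduction, the plan is to split the argument into two stages: first, reduce to a fully atomic measure space via a factorization; second, construct a pathological Dunford-Schwartz operator on that atomic space.

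For the reduction, I will choose a measurable partition $\Omega = \bigsqcup_{n \in \mb{N}} \Omega_n$ with $0 < \mu(\Omega_n) < \infty$ (possible by $\sigma$-finiteness), set $\hat{\mu}(n) := \mu(\Omega_n)$, and denote by $\pi \colon \Omega \to \mb{N}$ the projection sending $\omega \in \Omega_n$ to $n$. The embedding $J \colon L^1(\mb{N},\hat{\mu}) + L^\infty(\mb{N},\hat{\mu}) \to L^1(X) + L^\infty(X)$ given by $Jg := g \circ \pi$ is a positive isometry on both $L^1$ and $L^\infty$, while the conditional expectation $E$ given by $Ef(n) := \hat{\mu}(n)^{-1}\int_{\Omega_n} f\, d\mu$ is a positive contraction on both; crucially, $EJ = \mr{id}$. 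Consequently, for any Dunford-Schwartz operator $\hat{T}$ on $(\mb{N},\hat{\mu})$, the operator $T := J\hat{T}E$ is Dunford-Schwartz on $X$, and a trivial telescoping shows $T^n = J\hat{T}^n E$ for every $n \geq 1$. Hence $\frac{1}{N}\sum_{n=1}^N T^n f(\omega)$ diverges at every $\omega \in \Omega_k$ whenever $\frac{1}{N}\sum_{n=1}^N \hat{T}^n Ef(k)$ diverges, reducing the whole problem to constructing a pointwise-divergent example on the atomic space for the function $\hat{f} := Ef$.

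Since I retain complete control over the partition, I will choose it to shape $\hat{f}$ favourably. Replacing $f$ by $-f$ if needed I may assume $\mu(\{f > \varepsilon\}) = \infty$, and I will arrange infinitely many pieces $\Omega_n$ to be contained in $\{f > \varepsilon\}$---so that $\hat{f}(n) > \varepsilon$ at those $n$---placing these indices on a set $C \subset \mb{N}$ of wildly oscillating density, say with lower density $0$ and upper density $1$. The remaining pieces come from $\{f \leq \varepsilon\}$, and the weights $\hat{\mu}$ can in addition be arranged to be non-decreasing, subject only to the unavoidable constraint that atoms of $\mu$ must be respected but can always be grouped. Then the left shift $\hat{T}g(n) := g(n+1)$ is Dunford-Schwartz on $(\mb{N},\hat{\mu})$, and because $\hat{T}^n\hat{f}(k) = \hat{f}(k+n)$, divergence of the Cesàro averages at every $k$ reduces to divergence of the Cesàro means of the single sequence $(\hat{f}(n))_n$; this is meant to follow from the oscillating density of $C$ combined with the gap $\hat{f} > \varepsilon$ on $C$.

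The main obstacle I anticipate is securing a quantitative separation between the values of $\hat{f}$ on $C$ and on $C^c$: if $f$ hovers just above $\varepsilon$ throughout most of $\Omega$---or in the extreme case $f \equiv c > \varepsilon$---then $\hat{f}$ clusters around a single value and the plain shift produces convergent Cesàro means; indeed no constant-preserving $\hat{T}$ can diverge on a near-constant $\hat{f}$. In that regime the left shift must be perturbed by a multiplication operator $M_\phi$ with $0 \leq \phi \leq 1$ chosen so that the cocycle products $\prod_{k=0}^{n-1} \phi(\hat{T}^k \,\cdot)$ exhibit a controlled oscillatory decay, in the spirit of the Chilin--Litvinov construction but now transplanted to the atomic quotient where the combinatorics are cleaner and no measure-preserving point map on the original $\Omega$ need be invoked. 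Coordinating this cocycle with the partition so that $\hat{f}$ and the products interact to force genuinely divergent Cesàro averages is where I expect the main technical work to sit.
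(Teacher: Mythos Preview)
Your factorization scheme---partition $\Omega$, push down via the conditional expectation $E$, lift via $J$, and set $T=J\hat{T}E$ so that $T^n=J\hat{T}^nE$---is precisely what the paper does (there written $T=QSP$, with atomic space $\mc{J}\times\mb{N}$), and the paper likewise builds $\hat{T}$ as a shift on the $\mb{N}$-coordinate composed with a multiplication operator. So the architecture is right.

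The gap is the restriction $0\le\phi\le1$. With a non-negative multiplier the cocycle products $P_m=\prod_{k=1}^m\phi(\cdot+k)$ form a \emph{monotone non-increasing} sequence in $[0,1]$, hence converge, and therefore so do their Ces\`aro means; for a constant $\hat{f}\equiv c$ this gives $\hat{T}^m\hat{f}=cP_m$ and the ergodic averages converge at every point. No ``oscillatory decay'' is possible with $\phi\ge0$. The paper instead takes $\phi\in\{-1,+1\}$, namely $\phi(n)=-1$ exactly when $n$ is a power of $3$, so that the cocycle becomes $(-1)^{\lfloor\log_3(n+m)\rfloor-\lfloor\log_3 n\rfloor}$ and flips sign on geometrically growing blocks; this forces $\limsup\ge1/9$ and $\liminf\le-1/9$ along the subsequences $N=3^{b+2\ell}-n-1$ and $N=3^{b+2\ell+1}-n-1$. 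Once you allow signs, the density-of-$C$ manoeuvre becomes unnecessary: the paper simply arranges all tail pieces $H_{j,n}$ ($n\ge1$) to lie inside a set $\{\Re(f/z_0)\in[1/2,1]\}$ of infinite measure---which also handles complex-valued $f$, where ``replace $f$ by $-f$'' does not suffice---and lets the sign pattern do all the work uniformly, with no case distinction.
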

\begin{proof}
By definition there exist functions $f_1\in L^1(X)$ and $f_2\in L^\infty(X)$ with $f=f_1+f_2$, and so we have that $\mu(\{|f|>2\|f_2\|_\infty\})<\infty$. We may assume the existence of an $\varepsilon\in(0,2\|f_2\|_\infty)$ with $\mu(\{|f|>\varepsilon\})=\infty$. This then implies that there exists a $z_0\in\mb{C}\backslash \{0\}$ such that $\mu(\{\Re (f/z_0) \in[1/2,1]\})=\infty$, since finitely many such sets cover $\{|f|\in[\varepsilon,2\|f_2\|_\infty]\}$. Let us write $A:=\{\Re (f/z_0) \in[1/2,1]\}$. By the $\sigma$-finiteness of $X$ together with $\mu(A)=\infty$, there exists a countable collection of pairwise disjoint sets of positive measure $H_{j,n}$ with $j\in\mc{J}$ and $n\in\mb{N}$ such that
\begin{align*}
\bigcup_{j\in\mc{J},n\in\mb{N}} H_{j,n}= \Omega, &&\bigcup_{j\in\mc{J}, n\in\mb{N}^+} H_{j,n}\subset A,
\end{align*}
and for each $j\in\mc{J}$ and $n\in\mb{N}$ we have
\begin{align*}
\mu(H_{j,n+1})\in[\mu(H_{j,n}), \infty).
\end{align*}

Now define the operator $T:L^1(X)+L^\infty(X)\to L^1(X)+L^\infty(X)$ as follows.
\begin{align*}
(Tg)(x):= \frac{(-1)^{\mathds{1}_{\log_3 (n+1)\in\mb{N}}}}{\mu(H_{j,n+1})}\int_{H_{j,n+1}} g d\mu && \mbox{ when } x\in H_{j,n}.
\end{align*}

It follows easily from the way $T$ is defined that $T|_{L^p}:L^p\to L^p$ is a contraction for every $1\leq p\leq \infty$, hence it is a Dunford-Schwartz operator over $X$. It remains to be shown that
\[
\frac{1}{N}\sum_{m=1}^N (T^mf)(x)
\]
fails to converge for almost every $x\in\Omega$. 
It is sufficient to instead show that
\[
\frac{1}{N}\sum_{m=1}^N \Re\left((T^mf)(x)/z_0\right)
\]
fails to converge for almost all $x\in\Omega$.

To this end, note that for any $m\in\mb{N}^+$, $j\in\mc{J}$, $n\in\mb{N}$ and $x\in H_{j,n}$ we have
\begin{align*}
(T^mf)(x)&= \frac{\prod_{k=1}^m(-1)^{\mathds{1}_{\log_3 (n+k)\in\mb{N}}}}{\mu(H_{j,n+m})}\int_{H_{j,n+m}} f d\mu\\
&= \frac{(-1)^{\lfloor \log_3 (n+m) \rfloor-\lfloor \log_3 n \rfloor}}{\mu(H_{j,n+m})}\int_{H_{j,n+m}} f d\mu.
\end{align*}
Let $b:=\lfloor \log_3 n \rfloor$. Since $\Re (f/z_0) \in[1/2,1]$ on $H_{j,k}$ for all $k\geq 1$, we then have for any $\ell\in\mb{N}^+$ that on the one hand
\begin{align*}
&\frac{1}{3^{b+2\ell}-n-1}\sum_{m=1}^{3^{b+2\ell}-n-1} \Re\left((T^mf)(x)/z_0\right)\\
=&\frac{1}{3^{b+2\ell}-n-1}\sum_{m=n+1}^{3^{b+2\ell}-1} \frac{(-1)^{\lfloor \log_3 m \rfloor-\lfloor \log_3 n \rfloor}}{\mu(H_{j,m})}\int_{H_{j,m}} \Re\left(f/z_0\right) d\mu\\
\leq&
\frac{1}{3^{b+2\ell}-n-1}
\sum_{d=0}^{2\ell-1}\,
\sum_{m=0}^{2\cdot3^{b+d}-1} \frac{(-1)^d}{\mu(H_{j,3^{b+d}+m})}\int_{H_{j,3^{b+d}+m}} \Re\left(f/z_0\right) d\mu\\
\leq&
\frac{1}{3^{b+2\ell}-n-1}
\sum_{a=0}^{\ell-1}\left(
2\cdot3^{b+2a}\cdot 1 - 2\cdot3^{b+2a+1}\cdot \frac{1}{2}\right)\\
\leq&
\frac{-3^{b+2(\ell-1)}}{3^{b+2\ell}-n-1}\leq \frac{-3^{b+2(\ell-1)}}{3^{b+2\ell}}=-1/9,
\end{align*}
and on the other hand
\begin{align*}
&\frac{1}{3^{b+2\ell+1}-n-1}\sum_{m=1}^{3^{b+2\ell+1}-n-1} \Re\left((T^mf)(x)/z_0\right)\\
=&\frac{1}{3^{b+2\ell+1}-n-1}\sum_{m=n+1}^{3^{b+2\ell+1}-1} \frac{(-1)^{\lfloor \log_3 m \rfloor-\lfloor \log_3 n \rfloor}}{\mu(H_{j,m})}\int_{H_{j,m}} \Re\left(f/z_0\right) d\mu\\
\geq&
\frac{1}{3^{b+2\ell+1}-n-1}
\sum_{d=1}^{2\ell}\,
\sum_{m=0}^{2\cdot3^{b+d}-1} \frac{(-1)^d}{\mu(H_{j,3^{b+d}+m})}\int_{H_{j,3^{b+d}+m}} \Re\left(f/z_0\right) d\mu\\
\geq&
\frac{1}{3^{b+2\ell+1}-n-1}
\sum_{a=1}^{\ell}\left(
2\cdot3^{b+2a}\cdot \frac{1}{2} - 2\cdot3^{b+2a-1}\cdot 1\right)\\
\geq&
\frac{3^{b+2\ell-1}}{3^{b+2\ell+1}-n-1}\geq \frac{3^{b+2\ell-1}}{3^{b+2\ell+1}}=1/9.
\end{align*}

In other words, for almost all $x\in\Omega$, we have
\[
\limsup\frac{1}{N}\sum_{m=1}^N \Re\left((T^mf)(x)/z_0\right)\geq 1/9
\]
and
\[
\liminf\frac{1}{N}\sum_{m=1}^N \Re\left((T^mf)(x)/z_0\right)\leq -1/9,
\]
hence pointwise convergence of $\frac{1}{N}\sum_{m=1}^N (T^mf)$ fails in almost every point of $\Omega$.

\end{proof}

Next, we shall have a closer look at the structure of this operator $T$.

Let $O:=\mc{J}\times\mb{N}$, and $\varphi:\Omega\to O$ be the factor map $x\mapsto (j,n)$ whenever $x\in H_{j,n}$ ($j\in\mc{J}, n\in\mb{N}$). This map induces a fully atomic push-forward measure $\nu$ on $O$ with $\nu(\{(j,n)\}):=\mu(H_{j,n})$. Further, with the notation $Y:=(O,\ms{P}(O),\nu)$, on the level of functions, we have the natural operators 
\[
P:L^1(X)+L^\infty(X)\to L^1(Y)+L^\infty(Y)
\]
and
\[
Q:L^1(Y)+L^\infty(Y)\to L^1(X)+L^\infty(X)
\]
defined through
\begin{align*}
(Pg)(j,n)=&\int_{H_{j,n}} g\,d\mu, \mbox{ and }\\
(Qv)(x)={}& v(j,n) \,\forall x\in H_{j,n}
\end{align*}
for all $j\in\mc{J}, n\in\mb{N}$. This allows us to define the operator $S:=PTQ$.

Consider the Koopman operator $K$ induced by the the left shift acting on the $\mb{N}$ component of $O$, and the multiplication operator $M_\psi:L^p(Y)\to L^p$ (for all $p$) which multiplies by the function $\psi\in\ell^\infty(O)$ where $\psi$ takes the value $-1$ on pairs $(j,n)$ where $n\in\mb{N}$ is a power of $3$ and the value $1$ at all other points. Then from the definition of these operators we may see that also $S=KM_\psi$ holds.

\begin{Le}\label{le:non_conv}
For any function $g\in L^1(X)+L^\infty(X)$, the averages $\frac{1}{N}\sum_{n=1}^N T^ng$ converge almost nowhere if and only if the averages $\frac{1}{N}\sum_{n=1}^N S^n(Pg)$ converge nowhere.
\end{Le}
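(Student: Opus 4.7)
The plan is to establish a factorization $T^ng=Q(S^n(Pg))$ for every $n\geq 1$, which will transfer pointwise convergence of Cesàro averages of $T^ng$ on $X$ to that of $S^n(Pg)$ on the fully atomic factor $Y$. Once this is in place, the positivity of every atom weight $\mu(H_{j,n})>0$ will force the equivalence between ``almost nowhere'' on $X$ and ``nowhere'' on $O$.

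First I would read off two identities directly from the definitions. (i) $T=QSP$: for any $h$ and $x\in H_{j,n}$, the value $(Th)(x)$ depends on $h$ only through $(Ph)(j,n+1)$ and equals $\psi(j,n+1)(Ph)(j,n+1)=(S(Ph))(j,n)=(Q(S(Ph)))(x)$. (ii) $PQ=I$: by the definitions of $P$ and $Q$, $(P(Qv))(j,n)=v(j,n)$. Combining these gives $TQ=(QSP)Q=QS$, and induction on $n$ then yields
\[ T^{n+1}g = T\bigl(Q(S^n(Pg))\bigr) = QS\bigl(S^n(Pg)\bigr) = Q\bigl(S^{n+1}(Pg)\bigr), \]
i.e., $T^ng=Q(S^n(Pg))$ for every $n\geq 1$.

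By linearity of $Q$,
\[ \frac{1}{N}\sum_{n=1}^N T^ng \;=\; Q\Bigl(\frac{1}{N}\sum_{n=1}^N S^n(Pg)\Bigr), \]
and since $(Qu)(x)=u(\varphi(x))$, for $x\in H_{j,n}$ the left-hand sequence converges at $x$ exactly when the right-hand sequence converges at $(j,n)\in O$. Writing $E\subseteq O$ for the convergence set of $\frac{1}{N}\sum_{n=1}^N S^n(Pg)$, the convergence set of the Cesàro averages of $T^ng$ on $X$ is therefore the saturation $\varphi^{-1}(E)=\bigsqcup_{(j,n)\in E}H_{j,n}$.

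Finally, since every $H_{j,n}$ has positive measure, $\mu(\varphi^{-1}(E))=0$ if and only if $E=\emptyset$, yielding the stated equivalence. There is no serious conceptual obstacle: the only care needed is in the clean verification of $T=QSP$ and $PQ=I$ straight from the definitions, after which the measure-theoretic conclusion is automatic from positivity of the atom weights $\mu(H_{j,n})$.
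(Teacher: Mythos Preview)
Your proof is correct and follows essentially the same route as the paper: both establish $T=QSP$ and use $PQ=I$ to collapse the iterates to $T^ng=Q\bigl(S^n(Pg)\bigr)$, then read off the pointwise equivalence through $Q$. Your write-up is slightly more explicit in two places---the inductive derivation of $T^ng=Q(S^n(Pg))$ via $TQ=QS$, and the observation that positivity of each $\mu(H_{j,n})$ forces ``almost nowhere on $X$'' to coincide with ``nowhere on $O$''---whereas the paper compresses both into the remark that $P$ and $Q$ are mutually inverse isometries between $\mathrm{rg}(T)$ and $L^1(Y)+L^\infty(Y)$ that preserve pointwise behaviour.
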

\begin{proof}
To this end, note that $P$ is an isometric isomorphism from the range of $T$ on $L^1(X)+L^\infty(X)$ to $L^1(Y)+L^\infty(Y)$, and its inverse is $Q$, with both $Q$ and $P$ preserving pointwise behaviour on the spaces in question. Hence $T=QSP$, and $\frac{1}{N}\sum_{n=1}^N T^ng$ converges almost everywhere if and only if so does
\[
\frac{1}{N}\sum_{n=1}^N T^ng=\frac{1}{N}\sum_{n=1}^N (QSP)^ng=Q\frac{1}{N}\sum_{n=1}^N S^n(Pg).
\]
\end{proof}

Given a function $v\in L^1(Y)+L^\infty(Y)$, let $\mc{C}(v;j,n)\subset\mb{C}$ denote the set of accumulation points of the sequence $\left(\frac{1}{N}\sum_{k=1}^N (S^kv)(j,n)\right)_{N\in\mb{N}^+}$, and let
\[
d(v;j,n):=\mr{diam} (\mc{C}(v;j,n)).
\]
We then have the following result.

\begin{Le}\label{le:diam}
For any $v\in L^1(Y)+L^\infty(Y)$ and $j\in\mc{J}$, we have that $\mc{C}(v;j,n)$ is independent of $n\in\mb{N}$, and the sequence $\frac{1}{N}\sum_{k=1}^N (S^kv)(j,n)$ converges if and only if $d(v;j,0)=0$.
\end{Le}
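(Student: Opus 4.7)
The plan is to push the whole analysis onto a single numerical sequence and recover the rest by sign-bookkeeping. From $S=KM_\psi$, an easy induction on $k$ gives
\[
(S^kv)(j,n)=\Bigl(\prod_{i=1}^k\psi(j,n+i)\Bigr)v(j,n+k)=\sigma(n)\sigma(n+k)\,v(j,n+k),
\]
where $\sigma(m):=(-1)^{P(m)}$ and $P(m)$ counts the powers of $3$ in $\{1,\dots,m\}$ (with $P(0):=0$). Setting $w_m:=\sigma(m)v(j,m)$ and $W_M:=\sum_{m=1}^M w_m$, the Cesàro averages take the compact form
\[
\frac{1}{N}\sum_{k=1}^N(S^kv)(j,n)=\sigma(n)\,\frac{W_{n+N}-W_n}{N},
\]
so all the $n$-dependence sits in the global sign $\sigma(n)\in\{\pm 1\}$ and in the correction $-\sigma(n)W_n/N$, which tends to $0$ as $N\to\infty$ with $n$ fixed.

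Next I would show $(W_M/M)_M$ is bounded, which pins down a common accumulation set for the sequences $(W_{n+N}/N)_N$ across all $n$. Writing $W_{n+N}/N=(W_{n+N}/(n+N))\cdot((n+N)/N)$ and using $(n+N)/N\to 1$ reduces the accumulation analysis to the single sequence $(W_M/M)_{M\to\infty}$, whose accumulation set $\mc{D}$ manifestly does not depend on $n$. Boundedness itself follows by decomposing $v=v_1+v_2$ with $v_1\in L^1(Y),\,v_2\in L^\infty(Y)$: the $L^\infty$ part gives $|W_M^{(2)}|\leq M\|v_2\|_\infty$, while for the $L^1$ part the construction in Theorem~\ref{thm:f} forces $\mu(H_{j,m+1})\geq\mu(H_{j,m})$ and hence $\mu(H_{j,m})\geq\mu(H_{j,0})>0$, so
\[
\sum_{m\in\mb{N}}|v_1(j,m)|\leq\frac{1}{\mu(H_{j,0})}\sum_{m\in\mb{N}}|v_1(j,m)|\mu(H_{j,m})\leq\frac{\|v_1\|_{L^1(Y)}}{\mu(H_{j,0})}<\infty,
\]
yielding a uniform bound on $(W_M^{(1)})$.

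Combining the two ingredients gives $\mc{C}(v;j,n)=\sigma(n)\mc{D}$; in particular $d(v;j,n)=\mathrm{diam}(\mc{D})$ is independent of $n$, and since $\sigma(n)\in\{\pm 1\}$, the set $\mc{C}(v;j,n)$ is a singleton if and only if $\mc{D}$ is, i.e., if and only if $d(v;j,0)=0$. This gives the convergence criterion. The main technical point is the $L^1$-to-$\ell^1$ conversion used to bound $(W_M^{(1)})$, which crucially exploits the uniform positive lower bound $\mu(H_{j,m})\geq\mu(H_{j,0})$ inherited from the construction of $T$; everything else amounts to careful sign-tracking.
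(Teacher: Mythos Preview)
Your argument is correct and follows essentially the same line as the paper's: both hinge on the shift-plus-sign identity $(S^kv)(j,m)=\pm\,(S^{k+(m-n)}v)(j,n)$ and on bounding the averages through $(v(j,\ell))_\ell\in\ell^\infty(\mb{N})$, which is precisely what your $L^1$--$L^\infty$ decomposition together with $\mu(H_{j,m})\geq\mu(H_{j,0})$ yields. Your partial-sum reformulation via $W_M$ is tidier but not a different idea; in fact you slightly sharpen the statement by observing that $\mc{C}(v;j,n)=\sigma(n)\mc{D}$ is only invariant in $n$ up to a global sign, the diameter being what is truly independent of $n$ (and all that is used downstream).
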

\begin{proof}
The first part follows from the fact that for nonnegative integers $n<m$ and $k\in\mb{N}^+$, we have
\begin{equation*}
(S^kv)(j,m)=(S^{k+m-n}v)(j,n)\cdot(-1)^{\left|\left\{n<\ell\leq m,\, \ell\in\mb{N}, \,\log_3\ell\in\mb{N}\right\}\right|}.
\end{equation*}
For the second part, we only have to show that the sequence $\frac{1}{N}\sum_{k=1}^N (S^kv)(j,0)$ is always bounded. Since $\nu(\{(j,\ell)\})$ is monotone increasing in $\ell$, we have that $(v(j,\ell))_{\ell\in\mb{N}}\in\ell^\infty(\mb{N})$. Also, $\|\psi\|_\infty=1$, hence 
\[
\left|\frac{1}{N}\sum_{k=1}^N (S^kv)(j,0)\right|\leq\|(v(j,\ell))_{\ell\in\mb{N}}\|_\infty
\]
\end{proof}

With these results at hand, we are ready to show that $T$ is a counter-example not only for the function $f$, but also provides almost nowhere convergence of the ergodic means for a large class of functions, in the Baire category sense.

\begin{Thm}
The set
\[
\ms{H}:=\left\{
g\in L^1(X)+L^\infty(X)\left| \frac{1}{N}\sum_{n=1}^N T^ng \mbox{ is almost nowhere convergent }
\right.
\right\}
\]
is residual in $L^1(X)+L^\infty(X)$.

\end{Thm}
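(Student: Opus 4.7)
My aim is to show that $\ms{H}^c$ is meager in $L^1(X) + L^\infty(X)$, which is equivalent to $\ms{H}$ being residual. By Lemmas \ref{le:non_conv} and \ref{le:diam}, $g \in \ms{H}^c$ precisely when the scalar sequence
\[
A_N(g;j) := \tfrac{1}{N}\sum_{k=1}^N (S^k Pg)(j,0)
\]
converges for at least one $j \in \mc{J}$; hence $\ms{H}^c = \bigcup_{j \in \mc{J}} \ms{H}_j^c$ with $\ms{H}_j^c := \{g : A_N(g;j) \text{ converges}\}$. Since $\mc{J}$ is countable by construction, it suffices to show that each $\ms{H}_j^c$ is nowhere dense, i.e., closed with empty interior.

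To prove closedness of $\ms{H}_j^c$, I would first establish a uniform bound $|A_N(g;j)| \leq C_j \|g\|_{L^1+L^\infty}$. Following the estimate already used in the proof of Lemma \ref{le:diam}, for any decomposition $g = g_1 + g_2 \in L^1 + L^\infty$ the monotonicity $\mu(H_{j,n+1}) \geq \mu(H_{j,n})$ yields $|(Pg)(j,k)| \leq \|g_1\|_1/\mu(H_{j,0}) + \|g_2\|_\infty$ for all $k \in \mb{N}$; infimizing over decompositions gives the claimed bound with $C_j := \max(1,\mu(H_{j,0})^{-1})$. A routine $\varepsilon/3$-argument then shows that the pointwise convergence set of a uniformly norm-bounded sequence of continuous linear functionals is norm-closed.

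To prove empty interior, fix $g^* \in \ms{H}_j^c$ and $r > 0$, and take $g := g^* + \alpha f$ with $\alpha \in \mb{R}\setminus\{0\}$ and $|\alpha| < r/\|f\|_{L^1+L^\infty}$, so $g \in B_r(g^*)$. Theorem \ref{thm:f} (combined with Lemmas \ref{le:non_conv} and \ref{le:diam}) ensures that $A_N(f;j)$ does not converge. By linearity $A_N(g;j) = A_N(g^*;j) + \alpha A_N(f;j)$ is the sum of a convergent sequence and a non-convergent one, and hence does not converge, so $g \notin \ms{H}_j^c$. This proves $g^*$ is not interior to $\ms{H}_j^c$ and completes the proof that $\ms{H}_j^c$ is nowhere dense.

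I expect the main subtlety to be the closedness step: without uniform $L^1+L^\infty$-boundedness of the $A_N(\cdot;j)$, empty interior alone would not yield nowhere density, since $\ms{H}_j^c$ need not be closed a priori. This uniform bound rests crucially on the structural monotonicity $\mu(H_{j,n+1}) \geq \mu(H_{j,n})$ built into the construction of the $H_{j,n}$ in the proof of Theorem \ref{thm:f}.
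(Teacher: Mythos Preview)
Your argument is correct, and it takes a genuinely different route from the paper's.

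The paper first passes through the direct-sum decomposition $L^1(X)+L^\infty(X)=U\oplus\mr{rg}(Q)$ to reduce to the atomic space $L^1(Y)+L^\infty(Y)$, and then exhibits an explicit \emph{open dense} subset $\ms{G}_0\subset\ms{G}$ consisting of those $v$ for which the divergence is uniform in $j$, i.e.\ $\inf_{j}d(v;j,0)>0$; openness is proved by a perturbation estimate exploiting that the $L^1$ part of any $w$ lies in $c_0$ along each fibre, and density by correcting on the ``bad'' index set $\mc{J}_1$ with a scaled indicator. Your approach instead attacks $\ms{H}^c$ directly as $\bigcup_{j\in\mc{J}}\ms{H}_j^c$, and shows each $\ms{H}_j^c$ is a closed proper linear subspace, hence nowhere dense. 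The closedness step via the uniform bound $|A_N(\cdot;j)|\le C_j\|\cdot\|_{L^1+L^\infty}$ (which indeed hinges on the monotonicity $\mu(H_{j,n+1})\ge\mu(H_{j,n})$, just as in Lemma~\ref{le:diam}) is the right substitute for the paper's openness argument, and the empty-interior step via perturbation by $\alpha f$ is cleaner than the paper's density construction. Your proof relies on $\mc{J}$ being countable---which it is, by the construction in Theorem~\ref{thm:f}---whereas the paper's $\inf_j$ formulation sidesteps this. In exchange, your argument avoids the detour through $\ms{G}_0$ and the somewhat delicate indicator-perturbation estimate at the end of the paper's proof. A minor stylistic remark: since each $\ms{H}_j^c$ is in fact a linear subspace, once closedness is established the empty interior follows immediately from properness ($f\notin\ms{H}_j^c$), which is exactly what your perturbation argument encodes.
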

\begin{proof}
Consider the closed subspace
\[
U:=\left\{
g\in L^1(X)+L^\infty(X)\left|
\int_{H_{j,n}}g\,d\mu=0 \,\forall j\in\mc{J},n\in\mb{N}
\right.
\right\}.
\]
Then it is clear that we have the direct sum decomposition
\[
L^1(X)+L^\infty(X)=U\oplus\mr{rg}(Q)
\]
with $\mr{rg}(Q)$ also closed. Also, we have $U\subset\rm{ker}(T)$, whence
\[
\ms{H}=(\ms{H}\cap\mr{rg}(Q))+U.
\]
Thus to show that $\ms{H}$ is residual, it is enough to prove that $\ms{H}\cap\mr{rg}(Q)$ is residual in $\mr{rg}(Q)$.
However, we have seen that the subspace $\mr{rg}(Q)$ is isometrically isomorphic to $L^1(Y)+L^\infty(Y)$, and by Lemma \ref{le:non_conv},
\[
P\ms{H}=\left\{
v\in L^1(Y)+L^\infty(Y)\left| \frac{1}{N}\sum_{n=1}^N T^ng \mbox{ is nowhere convergent }
\right.
\right\}=:\ms{G},
\]
so this is equivalent to showing that $\ms{G}$ is residual in $L^1(Y)+L^\infty(Y)$.

Actually, we shall instead show that a somewhat smaller set,
\[
\ms{G}_0=\left\{
v\in L^1(Y)+L^\infty(Y)\left| \inf_{j\in\mc{J}} d(v;j,0)>0
\right.
\right\},
\]
is already residual. The fact that $\ms{G}_0\subset\ms{G}$ follows from Lemma \ref{le:diam}.\\
Let $v_0\in\ms{G}_0$, and $\varepsilon:=\inf_{j\in\mc{J}} d(v_0;j,0)$, and $w\in L^1(Y)+L^\infty(Y)$ with
\[
\|w\|_{L^1(Y)+L^\infty(Y)}<\varepsilon/3.
\]
Then there exist $w_1\in L^1(Y)$ and $w_2\in L^\infty(Y)$ such that $w=w_1+w_2$ and $\|w_1\|_1,\|w_2\|_\infty<\varepsilon/3$. Since for each $j\in\mc{J}$ we have that $\nu(\{(j,n)\})$ is monotone increasing, we also have that
\[
\left(
(w_1(j,n)
\right)_{n\in\mb{N}}\in c_0(\mb{N}),
\]
meaning that for each $j\in\mc{J}$
\begin{align*}
\lim_{n\to\infty} (S^nw_1)(j,0)\cdot(-1)^{\left|\left\{0<\ell\leq n,\, \ell\in\mb{N}, \,\log_3\ell\in\mb{N}\right\}\right|}=\lim_{n\to\infty}w_1(j,n)=0.
\end{align*}
Thus for any $(j,n)\in O$ we have $\lim_{N\to\infty}\frac{1}{N}\sum_{k=1}^N (S^kw_1)(j,n)=0$. On the other hand, since $S$ is contractive on $L^\infty(Y)$, we also have $d(w_2;j,n)\leq2\varepsilon/3$ for all $(j,n)\in O$. But then we have
\begin{align*}
d(v_0+w;j,n)&=d(v_0+w_1+w_2;j,n)=d(v_0+w_2;j,n)\\
&\geq d(v_0;j,n)-d(w_2;j,n)\geq \varepsilon/3,
\end{align*}
implying $v_0+w\in\ms{G}_0$. This shows that $\ms{G}_0$ is an open set in $L^1(Y)+L^\infty(Y)$. It now only remains to be shown that its complement contains no open ball.\\
Let $v_1\in L^1(Y)+L^\infty(Y)\backslash\ms{G}_0$ and $\delta>0$ be arbitrary. Let further
\[
\mc{J}_1:=\left\{
j\in\mc{J}\left|
d(v_1;j,0)<\delta/9
\right.
\right\},
\]
which by the choice of $v_1$ is not empty. Note that by the construction of $S$ and the proof of Theorem \ref{thm:f}, the characteristic function $\mathds{1}_{\mc{J}_1\times\mb{N}}\in L^1(Y)+L^\infty(Y)$ is such that $\|\mathds{1}_{\mc{J}_1\times\mb{N}}\|_{L^1(Y)+L^\infty(Y)}=1$ and for each $j_1\in\mc{J}_1$ we have $d(\mathds{1}_{\mc{J}_1\times\mb{N}};j_1,0)\geq 2/9$, whereas $d(\mathds{1}_{\mc{J}_1\times\mb{N}};j_2,0)=0$ for all $j_2\not\in\mc{J}_1$. Consequently, we have
\begin{align*}
&\inf_{j\in \mc{J}}d\left(v_1+\delta \mathds{1}_{\mc{J}_1\times\mb{N}};j,0\right)\\
={}&\min\left\{
\inf_{j\in \mc{J}_1}d\left(v_1+\delta \mathds{1}_{\mc{J}_1\times\mb{N}};j,0\right)
;
\inf_{j\not\in \mc{J}_1}d\left(v_1+\delta \mathds{1}_{\mc{J}_1\times\mb{N}};j,0\right)
\right\}\\
={}&\min\left\{
\inf_{j\in \mc{J}_1}d\left(v_1+\delta \mathds{1}_{\mc{J}_1\times\mb{N}};j,0\right)
;
\inf_{j\not\in \mc{J}_1}d\left(v_1;j,0\right)
\right\}\\
\geq&\min\left\{
\inf_{j\in \mc{J}_1}d\left(v_1+\delta \mathds{1}_{\mc{J}_1\times\mb{N}};j,0\right)
;
\delta/9
\right\}\\
\geq&\min\left\{
\delta \cdot\frac{2}{9}
-\inf_{j\in \mc{J}_1}d(v_1;j,0)
;
\delta/9
\right\}=\delta/9.
\end{align*}
This means that the open ball of radius $2\delta$ around $v_1$ intersects $\ms{G}_0$, and we are done.

\end{proof}

\begin{Rem}
If $\mc{J}$ is finite, then the set $\ms{G}_0$ is actually all of $\ms{G}$.
\end{Rem}

\section*{Acknowledgements}

The author has received funding from the European Research Council under the European Union's Seventh Framework Programme (FP7/2007-2013) / ERC grant agreement $\mr{n}^\circ$617747, and from the MTA R\'enyi Institute Lend\"ulet Limits of Structures Research Group.

\end{document}